\title{Improvements to Turing's Method II.}
\author{T. S. Trudgian\footnote{Supported by ARC Grant DE120100173.}\\
Mathematical Sciences Institute\\ The Australian National University,
 ACT 0200, Australia\\ timothy.trudgian@anu.edu.au
}
\newtheorem{cor}{Corollary}
\newtheorem{thm}{Theorem}
\newtheorem{lem}{Lemma}
\begin{document}

\maketitle

\begin{abstract}
\noindent
Turing's method uses explicit bounds on $|\int_{t_{1}}^{t_{2}} S(t)\, dt|$, where $\pi S(t)$ is the argument of the Riemann zeta-function. This article improves the bound on $|\int_{t_{1}}^{t_{2}} S(t)\, dt|$ given in \cite{TrudgianTuring}.
\end{abstract}

\section{Introduction}\label{intro}
Let $\zeta(s)$ be the Riemann zeta-function, and let $N(T)$ denote the number of zeroes of $\zeta(s)$ with $0<\Re(s)<1$ and $0<\Im(s)< T$. One seeks to calculate $N(T)$ as follows.

First one finds zeroes by locating sign changes of a real-valued function the zeroes of which agree with the non-trivial zeroes of the zeta-function. 
This gives one a lower bound on the number of zeroes of $\zeta(s)$ with $0<\Im(s)<T$. 

To check whether this initial analysis has omitted some zeroes one employs Turing's method. This was first annunciated by Turing \cite{Turing} in 1953 and has been used extensively since then. Recently, another method has been deployed by B\"{u}the \cite{Buethe}.
 
To apply Turing's method one needs good explicit bounds on 
$$\bigg|\int_{t_{1}}^{t_{2}} S(t)\, dt\bigg|,$$
for $t_{2}>t_{1}>0$, where $\pi S(t)$ is defined to be the argument of $\zeta(\frac{1}{2} + it)$. For a complete definition and a brief history of the problem, see \cite[\S 1]{TrudgianTuring} and \cite[Ch.~7]{Edwards}.

This article improves \cite{TrudgianTuring} and contains frequent references to the results therein. The main result is
\begin{thm}\label{main}
\begin{equation}\label{postman}
\bigg| \int_{t_{1}}^{t_{2}} S(t)\, dt\bigg| \leq 1.698 + 0.183\log\log t_{2}+ 0.049\log t_{2},
\end{equation}
for $t_{2}>t_{1} > 10^{5}.$ If the right-side of (\ref{postman}) is replaced by $a + b \log\log t_{2} + c \log t_{2}$, one may use Table \ref{tabled} on page \pageref{tabled} for more specific values of $a, b$ and $c$.
\end{thm}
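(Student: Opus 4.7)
The plan is to follow the Littlewood--lemma framework already used in \cite{TrudgianTuring} and sharpen each ingredient. First I would realize $\int_{t_1}^{t_2} S(t)\,dt$ as (essentially) a contour integral of $\log\zeta(s)$ around a rectangle with vertical sides at $\sigma=\tfrac12$ and at some abscissa $\sigma=\sigma_1>1$, and horizontal sides at heights $t_1$ and $t_2$. Littlewood's lemma then gives an identity of the form
\begin{equation*}
\pi\int_{t_1}^{t_2} S(t)\,dt = \int_{1/2}^{\sigma_1}\log\bigl|\zeta(\sigma+it_1)\bigr|\,d\sigma - \int_{1/2}^{\sigma_1}\log\bigl|\zeta(\sigma+it_2)\bigr|\,d\sigma + E,
\end{equation*}
where $E$ collects the contributions of the top, bottom and right edges and is controlled explicitly by bounds on $\arg\zeta(\sigma_1+it)$.

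Next I would split each $\sigma$-integral at an intermediate abscissa $\sigma_0 \in (\tfrac12,\sigma_1)$, to be optimized at the end. On the right piece $[\sigma_0,\sigma_1]$ I would bound $\log|\zeta(\sigma+it)|$ directly using the absolutely convergent Dirichlet series for $\log\zeta(s)$, obtaining a contribution that is essentially a constant depending only on $\sigma_0$. On the left piece $[\tfrac12,\sigma_0]$ I would use the best available Backlund-type convexity estimate for $|\zeta(\sigma+it)|$ in the critical strip; this is precisely where improvements over \cite{TrudgianTuring} feed in, since the coefficient $c=0.049$ of $\log t_2$ in \eqref{postman} should inherit directly from the sharpest constant in the convexity bound at $\sigma=\tfrac12$.

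With these two pieces in hand, I would choose $\sigma_0-\tfrac12$ to balance the convexity contribution (which grows as $\sigma_0\to\tfrac12$) against the Dirichlet-series tail (which grows as $\sigma_0\to\tfrac12$ from the other direction after integration). The natural optimum makes $\sigma_0-\tfrac12$ proportional to $1/\log\log t_2$, and this is exactly the mechanism that produces the $0.183\log\log t_2$ term in the bound. Finally, $\sigma_1$ (and the error term $E$) would be tuned to absorb into the constant $1.698$ together with all fixed contributions.

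The main obstacle is not conceptual but numerical: one must track every constant explicitly through the splitting, the choice of $\sigma_0$, $\sigma_1$, and the implicit constant in the convexity estimate, and verify that for $t_2>10^5$ the trade-off really delivers the stated triple $(1.698,\,0.183,\,0.049)$. To produce Table~\ref{tabled} the same optimization must be carried out parametrically, so that varying the lower cutoff on $t_2$ yields re-optimized triples $(a,b,c)$; ensuring monotonicity and consistency across these regimes will be the delicate bookkeeping step.
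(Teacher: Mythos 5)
Your framework (Littlewood's lemma, splitting the horizontal $\sigma$-integrals, growth estimates for $\zeta$ near $\sigma=\tfrac12$ and the Dirichlet series to the right) is the right general setting, but the proposal has a genuine gap and two mechanisms that do not match what is needed. The gap: Littlewood's lemma expresses $\pi\int_{t_1}^{t_2}S(t)\,dt$ as a \emph{difference} of two integrals $\int\log|\zeta(\sigma+it_j)|\,d\sigma$, so to bound its absolute value you need a \emph{lower} bound on $\int_{1/2}^{\infty}\log|\zeta(\sigma+it)|\,d\sigma$ as well as an upper bound. The lower bound is the heart of Turing's method: $\log|\zeta|$ can be arbitrarily negative near a zero, so no convexity or Dirichlet-series estimate supplies it; one needs a Jensen/zero-counting argument (in this paper, Lemma 2.11 of \cite{TrudgianTuring}, resting on Lemma 4.4 of \cite{Booker}), which is where the parameter $d$, the $\log 4$ term, and the $-\zeta'(\tfrac12+d)/\zeta(\tfrac12+d)$ term in (\ref{fina}) come from. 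Your $E$ term only accounts for edge contributions and does not address this. The present paper takes the lower bound over unchanged from \cite{TrudgianTuring} and improves only the upper bound.

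Two further points. First, you propose a ``Backlund-type convexity estimate'' at $\sigma=\tfrac12$, but the coefficient $c=0.049$ cannot be reached that way: here $\pi c = k_2/4 + \tfrac{d^2}{2}(\log 4-1)$, and convexity gives $k_2=\tfrac14$, yielding $c\approx 0.056$; the stated value needs the subconvexity bound $|\zeta(\tfrac12+it)|\le 0.732\,|4.678+it|^{1/6}\log|4.678+it|$ of \cite{PlattTrudgian}, interpolated against $|\zeta(1+it)|\le\tfrac34\log t$ of \cite{TrudZeta} via a Phragm\'en--Lindel\"of argument on $(s-1)\zeta(s)$ over the two strips $[\tfrac12,1]$ and $[1,1+\delta]$ -- this three-line interpolation is the actual novelty of the paper. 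Second, your mechanism for the $\log\log t_2$ term is not coherent: the Dirichlet series for $\log\zeta$ converges absolutely only for $\sigma>1$, so the splitting point cannot approach $\tfrac12$, and in the actual proof the splitting parameter $\delta$ is a fixed numerical constant (e.g.\ $0.148$ for $T=10^{10}$) optimized once per range of $T$, not a quantity of size $1/\log\log t_2$. The coefficient $b$ arises simply from integrating the $(\log t)^{k_3}$ and $(\log t)^{k_5}$ factors of the zeta bounds over intervals of fixed length, giving $\pi b = \tfrac{k_3+k_5}{4}+\tfrac{\delta k_5}{2}$.
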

In \cite{TrudgianTuring} the main result followed from Lemma 2.8 and Lemma 2.11 which concerned respectively obtaining an upper and a lower bound for $\Re\log\zeta(s)$ for $\Re(s)\geq \frac{1}{2}$. This article refines only the upper bound. Theorem \ref{main} improves on Theorem 2.2 in \cite{TrudgianTuring} for all $t_{2} \geq 10^{5}$.

The idea in this article is to use more sophisticated estimates on $\zeta(\sigma + it)$ for $\frac{1}{2} \leq \sigma \leq 1$; these estimates have been given in \cite{PlattTrudgian} and \cite{TrudZeta}. A bound on $|\zeta(s)|$ is given in \S \ref{bound}, a proof of Theorem 1 is given in \S \ref{proof}, and some concluding remarks are provided in \S \ref{conc}.

\subsection*{Acknowledgements}
I should like to thank Jan B\"{u}the for detailed discussions on this problem.

\section{Bounding $|\zeta(\sigma + it)|$ across the strip $\frac{1}{2}\leq\sigma\leq 1+ \delta$}\label{bound}

Using the inequality $\log (1+ x) \leq x$ it is easy to see that 
\begin{equation}\label{eq1}
\log|Q_{0} + \sigma + it| - \log t \leq \frac{1}{2} \left( \frac{\sigma_{1} + Q_{0}}{t_{0}}\right)^{2},
\end{equation}
for $\sigma \leq \sigma_{1}$ and $t\geq t_{0}$ and any $Q_{0}\geq 0$. With the trivial observations $\log|Q_{0} + \sigma + it| \geq \log t$ and $|\arg (Q_{0} + \sigma + it)| \leq \frac{\pi}{2}$ at hand, we may apply (\ref{eq1}) to see that
\begin{equation}\label{zero}
|\log (Q_{0} + \sigma + it)| \leq (1+ a_{0}) \log t, \quad\quad (\sigma \leq \sigma_{1})
\end{equation}
where
\begin{equation*}\label{a0def}
a_{0} = a_{0}(\sigma_{1}) = \frac{\sigma_{1} + Q_{0}}{2 t_{0}^{2} \log t_{0}} + \frac{\pi}{2\log t_{0}} + \frac{\pi(\sigma_{1} + Q_{0})^{2}}{4 t_{0} \log^{2} t_{0}}.
\end{equation*}

Suppose that
\begin{equation}\label{int}
|\zeta(\tfrac{1}{2} + it)| \leq k_{1} t^{k_{2}} (\log t)^{k_{3}}, \quad (t\geq t_{1}), \quad\quad |\zeta(1+ it)|\leq k_{4} \log t^{k_{5}}, \quad (t\geq t_{2}).
\end{equation}
Consider the function $h(s) = (s-1) \zeta(s)$, which is entire. Once we are able to exhibit bounds for $|h(s)|$ using the information in (\ref{int}) we can apply a version of the Phragm\'{e}n--Lindel\"{o}f principle to bound $|\zeta(s)|$.
Using Lemma 3 in \cite{TrudgianS2} and (\ref{zero}) and (\ref{eq1}) we may prove
\begin{lem}\label{lem1}
Let $h(s) = (s-1) \zeta(s)$, and let $\delta$ be a positive real number. Furthermore, let $Q_{0}\geq 0$ be a number for which
\begin{equation*}\label{one}
\begin{split}
|h(\tfrac{1}{2} +it)| &\leq k_{1} |Q_{0} + \tfrac{1}{2} + it|^{k_{2} + 1} (\log| Q_{0} + \tfrac{1}{2} + it|)^{k_{3}}\\
|h(1 + it)& \leq k_{4}|Q_{0} + 1+ it| (\log|Q_{0} + 1+ it|)^{k_{5}} \\
|h(1 + \delta + it)| &\leq \zeta(1+ \delta)|Q_{0} + 1+ \delta + it|,
\end{split}
\end{equation*}
for all $t$.
Then for $\sigma \in[\frac{1}{2}, 1]$ and $t\geq t_{0}$,
\begin{equation}\label{p1}
 |\zeta(s)| \leq \alpha_{1} k_{1}^{2(1-\sigma)} k_{4}^{2(\sigma- \frac{1}{2})} t^{2k_{2}(1-\sigma)} (\log t)^{2(k_{3}(1-\sigma) + k_{5}(\sigma - \frac{1}{2}))},
 \end{equation}
where 
\begin{equation*}\label{ce}
\alpha_{1} = (1+ a_{1}(1+ \delta, Q_{0}, t_{0}))^{k_{2} + 1} (1+a_{0}(1+ \delta, Q_{0}, t_{0}))^{k_{3} + k_{5}},
\end{equation*}
and
\begin{equation*} 
a_{0}(\sigma, Q_{0}, t) = \frac{\sigma + Q_{0}}{2 t^{2} \log t} + \frac{\pi}{2\log t} + \frac{\pi(\sigma + Q_{0})^{2}}{4 t \log^{2} t}, \quad a_{1}(\sigma, Q_{0}, t) = \frac{\sigma + Q_{0}}{t}.
\end{equation*}
Whereas for $\sigma\in[1, 1+ \delta]$ and $t\geq t_{0}$,
\begin{equation}\label{p2}
|\zeta(s)| \leq \alpha_{2} k_{4}^{\frac{1+ \delta - \sigma}{\delta}} \zeta(1+ \delta)^{\frac{\sigma -1}{\delta}} (\log t)^{k_{5}(\frac{1+ \delta -\sigma}{\delta})},
\end{equation}
where 
\begin{equation*}\label{a3}
\alpha_{2} = (1+ a_{1}(1+ \delta, Q_{0}, t_{0}))(1 + a_{0}(1+ \delta, Q_{0}, t_{0}))^{k_{5}}.
\end{equation*}
Finally, for all $\sigma\in[\frac{1}{2}, 1+ \delta]$ and $t\geq t_{0}$ we have
\begin{equation*}\label{kite}
|\zeta(s)| \leq \left(1+ a_{1}(1+ \delta, Q_{0})\right)^{k_{2} + 1}\left(1+ a_{0}(1+ \delta, Q_{0})\right)^{k_{3} + k_{5}}k_{1} t^{k_{2}} (\log t)^{k_{3}},
\end{equation*}
provided that
\begin{equation}\label{conditions}
t^{k_{2}} (\log t)^{k_{3} - k_{5}} \geq \frac{k_{4}}{k_{1}}, \quad
 t \geq \exp\left\{ \left(\frac{\zeta(1+ \delta)}{k_{4}}\right)^{\frac{1}{k_{5}}}\right\}.
\end{equation}
\end{lem}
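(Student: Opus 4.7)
The strategy is to apply the Phragm\'en--Lindel\"of principle (in the form of Lemma 3 of \cite{TrudgianS2}) to the entire function $h(s)=(s-1)\zeta(s)$ on the two vertical strips $[\tfrac{1}{2},1]$ and $[1,1+\delta]$ separately. On each strip, the three hypothesised boundary bounds on $|h|$ are precisely of the form $C|Q_{0}+s|^{A}(\log|Q_{0}+s|)^{B}$ required by that lemma, so convexity interpolation yields a bound of the same shape in the interior, with weights $2(1-\sigma)$ and $2(\sigma-\tfrac{1}{2})$ on the first strip, and $\tfrac{1+\delta-\sigma}{\delta}$ and $\tfrac{\sigma-1}{\delta}$ on the second.

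To pass from $|h(s)|$ to $|\zeta(s)|$ I divide through by $|s-1|\geq t$, which consumes exactly one power of $t$ and produces the exponent $2k_{2}(1-\sigma)$ of $t$ in (\ref{p1}) and the correct exponent in (\ref{p2}). The residual factors $|Q_{0}+\sigma+it|$ and $\log|Q_{0}+\sigma+it|$ are then replaced by $t\bigl(1+a_{1}(\sigma,Q_{0},t)\bigr)$ via the triangle inequality and by $\bigl(1+a_{0}(\sigma,Q_{0},t)\bigr)\log t$ via (\ref{zero}); evaluating $a_{0},a_{1}$ at the worst case $(\sigma,t)=(1+\delta,t_{0})$ renders these factors uniform across the strip. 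Since $1+a_{i}\geq 1$, the variable exponents may be inflated to a convenient uniform upper bound: on $[\tfrac{1}{2},1]$ the exponent $2k_{2}(1-\sigma)+1$ attached to $(1+a_{1})$ is at most $k_{2}+1$, while $(1+a_{0})^{2k_{3}(1-\sigma)+2k_{5}(\sigma-\frac{1}{2})}\leq(1+a_{0})^{k_{3}+k_{5}}$; the analogous estimate on $[1,1+\delta]$ delivers $\alpha_{2}$.

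For the uniform bound (\ref{kite}), the two inequalities in (\ref{conditions}) are engineered so that $k_{1}t^{k_{2}}(\log t)^{k_{3}}$ simultaneously dominates $k_{4}(\log t)^{k_{5}}$ and $\zeta(1+\delta)$. Under these hypotheses all three boundary bounds on $|h|$ may be majorised by the single expression $k_{1}|Q_{0}+s|^{k_{2}+1}(\log|Q_{0}+s|)^{k_{3}}$, after which one application of Phragm\'en--Lindel\"of to the whole strip $[\tfrac{1}{2},1+\delta]$, followed by the same cleanup of $|Q_{0}+s|$ and $\log|Q_{0}+s|$ as above, yields (\ref{kite}). The only real obstacle is bookkeeping: correctly identifying the maximum of each variable exponent of $(1+a_{0})$ and $(1+a_{1})$ on each sub-strip and confirming that the numerical constants pack into the stated $\alpha_{1}$ and $\alpha_{2}$. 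No analytical subtlety beyond Phragm\'en--Lindel\"of is involved.
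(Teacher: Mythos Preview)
Your derivation of \eqref{p1} and \eqref{p2} is essentially the paper's: apply Lemma~3 of \cite{TrudgianS2} to $h(s)$ on each sub-strip, convert $|Q_{0}+s|$ and $\log|Q_{0}+s|$ to $t$ and $\log t$ via $a_{1}$ and $a_{0}$, and then round the $\sigma$-dependent exponents of $(1+a_{1})$ and $(1+a_{0})$ up to their maxima on the strip. The paper does exactly this, noting explicitly the intermediate factor $(1+a_{1})^{2k_{2}(1-\sigma)+1}(1+a_{0})^{2k_{3}(1-\sigma)+2k_{5}(\sigma-\frac{1}{2})}$ and then bounding $1-\sigma$ and $\sigma-\tfrac{1}{2}$ by $\tfrac{1}{2}$.

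Where you diverge is in the derivation of the uniform bound \eqref{kite}. You propose to majorise all three boundary bounds on $|h|$ by the single expression $k_{1}|Q_{0}+s|^{k_{2}+1}(\log|Q_{0}+s|)^{k_{3}}$ and then apply Phragm\'en--Lindel\"of once across $[\tfrac{1}{2},1+\delta]$. That step does not go through as written: the hypotheses of the lemma (and of Phragm\'en--Lindel\"of) require the boundary estimates for \emph{all} $t$, whereas the inequalities in \eqref{conditions} are assumptions on a particular $t\geq t_{0}$. For small $t$ one certainly does not have $\zeta(1+\delta)|Q_{0}+1+\delta+it|\leq k_{1}|Q_{0}+1+\delta+it|^{k_{2}+1}(\log|Q_{0}+1+\delta+it|)^{k_{3}}$, so the majorisation fails on the right edge and a single sweep of Phragm\'en--Lindel\"of with that uniform boundary datum is not available.

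The paper avoids this by not re-invoking Phragm\'en--Lindel\"of at all. Once \eqref{p1} and \eqref{p2} are in hand, it simply observes that, for each fixed $t$ satisfying \eqref{conditions}, the right-hand side of \eqref{p1} is decreasing in $\sigma$ on $[\tfrac{1}{2},1]$ (this is equivalent to the first inequality in \eqref{conditions}) and the right-hand side of \eqref{p2} is decreasing on $[1,1+\delta]$ (equivalent to the second). Since the value of \eqref{p1} at $\sigma=\tfrac{1}{2}$, namely $\alpha_{1}k_{1}t^{k_{2}}(\log t)^{k_{3}}$, dominates the value of \eqref{p2} at $\sigma=1$, the maximum over the whole strip is attained at $\sigma=\tfrac{1}{2}$, which is exactly \eqref{kite}. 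Replacing your final paragraph with this monotonicity argument closes the gap and matches the paper.
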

\begin{proof}
In applying Lemma 3 of \cite{TrudgianS2} to $h(s)$ we need to relate $|Q_{0} + s|$ and $|s-1|$ to $t$. We simply note that
$$ \bigg| \frac{Q_{0} + s}{s-1}\bigg| \leq \frac{|Q_{0} + s|}{t} \leq 1+ \frac{\sigma + Q_{0}}{t_{0}} = 1 + a_{1}(\sigma, Q_{0}) \leq 1+ a_{1}(1+ \delta, Q_{0}),$$
in both regions $\sigma \in [\frac{1}{2}, 1]$ and $\sigma \in[1, 1+ \delta]$. 
Since $a_{0}$ and $a_{1}$ are small for any respectable value of $t_{0}$ we throw away some information in the exponents of $1+ a_{1}$ and $1+ a_{0}$. For example, in proving (\ref{p1}) we arrive at
$$(1+ a_{1})^{2k_{2}(1- \sigma) +1} (1+ a_{0})^{2k_{3}(1-\sigma) + 2k_{5}(\sigma - \frac{1}{2})}.$$
Rather than retain this dependence on $\sigma$ in the exponents, we simply bound $1-\sigma$  and $\sigma - \frac{1}{2}$ by $\frac{1}{2}$. A similar procedure is applied to prove (\ref{p2}).

To prove the bound in the region $\sigma\in[\frac{1}{2} , 1+ \delta]$ we note that the bounds in (\ref{p1}) and (\ref{p2}) are decreasing in $\sigma$ if the inequalities in (\ref{conditions}) are met. Finally, the bound in (\ref{p1}), evaluated at $\sigma = \frac{1}{2}$ exceeds the bound in (\ref{p2}), evaluated at $\sigma = 1$. This completes the lemma.
\end{proof}
It is worth recording values of $k_{1}, \ldots, k_{5}$, which we do in
\begin{cor}\label{jeep}
For $\sigma\in[\frac{1}{2}, 1+ \delta]$ and $t\geq t_{0}$ we have
\begin{equation*}\label{zephyr}
|\zeta(s)| \leq 0.732(1 + a_{1}(1+ \delta, 5, t_{0}))^{7/6} (1+ a_{0}(1+ \delta,5, t_{0}))^{2} t^{1/6} \log t,
\end{equation*}
provided that
\begin{equation*}\label{boreas}
t\geq \max\{ 1.16, \exp[4\zeta(1+ \delta)/3]\}.
\end{equation*}
\end{cor}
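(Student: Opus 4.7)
The plan is to specialise Lemma \ref{lem1} to concrete numerical inputs. I would draw from the explicit bounds surveyed in \cite{PlattTrudgian} and \cite{TrudZeta} to take
$$k_{1} = 0.732,\ k_{2} = \tfrac{1}{6},\ k_{3} = 1 \qquad\text{on the line }\Re(s) = \tfrac{1}{2},$$
and
$$k_{4} = \tfrac{3}{4},\ k_{5} = 1 \qquad\text{on the line }\Re(s) = 1,$$
with $Q_{0} = 5$ as the auxiliary shift for which those bounds are packaged in the form hypothesised by Lemma \ref{lem1}. (Strictly, one should first check that the inequalities on $|\zeta(\tfrac{1}{2}+it)|$ and $|\zeta(1+it)|$ in \cite{PlattTrudgian, TrudZeta} transfer to inequalities on $|h(\tfrac{1}{2}+it)|$ and $|h(1+it)|$ of the shape required, using the estimate $|Q_{0}+\sigma+it|\ge t$ on the right-hand side; this is where the particular value $Q_{0}=5$ matters.)

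With these parameters, $k_{2}+1 = 7/6$ and $k_{3}+k_{5} = 2$, so substituting directly into the third (uniform) bound of Lemma \ref{lem1} produces exactly
$$|\zeta(s)| \leq 0.732\,(1+a_{1}(1+\delta,5,t_{0}))^{7/6}(1+a_{0}(1+\delta,5,t_{0}))^{2}\, t^{1/6} \log t,$$
which is the inequality of the corollary. So the algebra is immediate; the remaining task is to show that the two provisos in (\ref{conditions}) reduce to the stated hypotheses on $t$.

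The second proviso, $t \geq \exp\{(\zeta(1+\delta)/k_{4})^{1/k_{5}}\}$, becomes $t \geq \exp[4\zeta(1+\delta)/3]$, which appears verbatim in the corollary. The first, $t^{k_{2}}(\log t)^{k_{3}-k_{5}} \geq k_{4}/k_{1}$, collapses to $t^{1/6} \geq 3/(4\cdot 0.732)$, i.e.\ $t \geq (3/(4\cdot 0.732))^{6}$; a brief calculation puts this threshold at about $1.157$, safely below the value $1.16$ recorded in the statement.

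I expect no substantive obstacle: the corollary is essentially a bookkeeping step that inserts published explicit estimates into the general machinery of Lemma \ref{lem1}. The only vigilance required is verifying that the $h$-versions of the hypotheses hold with $Q_{0}=5$, and rounding the two numerical thresholds in the direction that preserves the inequalities.
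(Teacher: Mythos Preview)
Your proposal is correct and follows exactly the same route as the paper: specialise Lemma~\ref{lem1} with $(k_{1},k_{2},k_{3},k_{4},k_{5},Q_{0})=(0.732,\tfrac{1}{6},1,\tfrac{3}{4},1,5)$, citing \cite{PlattTrudgian} and \cite{TrudZeta} for the input bounds, and read off the uniform estimate. Your verification of the two provisos in (\ref{conditions}) is in fact more explicit than the paper's own proof, which simply asserts that this choice of parameters ``proves the corollary.''
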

\begin{proof}
In \cite{TrudZeta} it was shown that $|\zeta(1+ it)| \leq \frac{3}{4} \log t$ for $t\geq 3$. In \cite{PlattTrudgian} it was shown that 
$|\zeta(\frac{1}{2} + it)| \leq 0.732 |4.678+it|^{\frac{1}{6}} \log |4.678+it|$ for all $t$. One may therefore choose 
\begin{equation}\label{values}
(k_{1}, k_{2}, k_{3}, k_{4}, k_{5}, Q_{0}) = (0.732, \tfrac{1}{6}, 1, \tfrac{3}{4}, 1, 5)
\end{equation}
 in Lemma \ref{lem1}, which proves the corollary.
\end{proof}
Although we shall use (\ref{values}) in our computation  we proceed with the variables $(k_{1}, \ldots, Q_{0})$ as parameters. 
We remark that, although Corollary \ref{jeep} is not used in this article, it is derived at very little additional cost and should prove useful for related problems.

\section{Proof of Theorem \ref{main}}\label{proof}
We are now able to proceed to the proof of Theorem \ref{main}. We need to give an upper bound for
$$I = \Re\int_{\frac{1}{2} + it}^{\infty + it} \log \zeta(s)\, ds.$$
To that end, we shall write
\begin{equation}\label{L2}
I \leq \int_{\frac{1}{2} + it}^{1+ it} \log |\zeta(s)|\, ds  +  \int_{1 + it}^{1+ \delta+ it} \log |\zeta(s)|\, ds + \int_{1+ \delta}^{\infty} \log |\zeta(\sigma)| \, d\sigma,
\end{equation}
and apply (\ref{p1}) to the first integral in (\ref{L2}) and (\ref{p2}) to the second. This gives us
\begin{lem}\label{second}
For $t\geq t_{0}$ we have
\begin{equation*}
I\leq A_{1} + B_{1} \log\log t + C_{1} \log t,
\end{equation*}
where
\begin{equation*}
\begin{split}
A_{1} &= \int_{1+\delta}^{\infty} \log |\zeta(\sigma)| \, d\sigma 
+ (\frac{k_{2}}{4} + \frac{1}{2} + \delta) \log (1+ a_{1}(1+ \delta, Q_{0}, t_{0})) 
+ \frac{1}{4} \log k_{1} \\
&
+ (\frac{k_{3}}{4} + \frac{k_{5}}{4} + \frac{k_{5}\delta}{2}) \log (1+ a_{0}(1+ \delta, Q_{0}, t_{0}))
+ (\frac{1}{4} + \frac{\delta}{2})\log k_{4} 
+ \frac{\delta}{2} \log \zeta(1+ \delta)
\end{split}
\end{equation*}
and
\begin{equation*}
B_{1} = \left( \frac{k_{3} + k_{5}}{4} + \frac{\delta k_{5}}{2}\right), \quad C_{1} = \frac{k_{2}}{4}.
\end{equation*}
\end{lem}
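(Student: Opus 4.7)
My plan is to apply the bounds (\ref{p1}) and (\ref{p2}) to the first two integrals on the right-hand side of (\ref{L2}), take logarithms, and integrate term-by-term in $\sigma$. The tail integral $\int_{1+\delta}^{\infty}\log|\zeta(\sigma)|\,d\sigma$ contributes only a constant and will be absorbed into $A_1$.

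A key (but elementary) refinement is that, since the integration in (\ref{L2}) is against $\sigma$, I would work with the $\sigma$-dependent exponents of $(1+a_1)$ and $(1+a_0)$ that appear in the proof of Lemma~\ref{lem1} immediately before the step where $1-\sigma$ and $\sigma-\tfrac{1}{2}$ are bounded by $\tfrac{1}{2}$. This costs nothing extra and yields sharper constants after integration. Concretely, on $[\tfrac{1}{2},1]$ the exponent of $(1+a_1)$ is $2k_2(1-\sigma)+1$ and the exponent of $(1+a_0)$ is $2(k_3(1-\sigma)+k_5(\sigma-\tfrac{1}{2}))$; on $[1,1+\delta]$ these exponents are $1$ and $k_5(1+\delta-\sigma)/\delta$ respectively. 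Here $a_0$ and $a_1$ abbreviate $a_0(1+\delta,Q_0,t_0)$ and $a_1(1+\delta,Q_0,t_0)$.

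The integration is then routine: the only identities needed are
\[
\int_{1/2}^{1}(1-\sigma)\,d\sigma = \int_{1/2}^{1}(\sigma-\tfrac{1}{2})\,d\sigma = \tfrac{1}{8}, \qquad \int_{1}^{1+\delta}\tfrac{1+\delta-\sigma}{\delta}\,d\sigma = \int_{1}^{1+\delta}\tfrac{\sigma-1}{\delta}\,d\sigma = \tfrac{\delta}{2}.
\]
Summing the pieces, the coefficient of $\log t$ is $k_2/4 = C_1$, the coefficient of $\log\log t$ is $(k_3+k_5)/4 + \delta k_5/2 = B_1$, and collecting the remaining $\sigma$-independent contributions (coefficients of $\log k_1$, $\log k_4$, $\log(1+a_0)$, $\log(1+a_1)$, $\log\zeta(1+\delta)$) together with the tail integral yields $A_1$.

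There is no genuine obstacle; the computation is book-keeping once the refinement above is in place. The only trap to avoid is feeding the constant exponents of $\alpha_1$ and $\alpha_2$ directly into the integral: that would give the strictly larger coefficients $(k_2+1)/2+\delta$ on $\log(1+a_1)$ and $(k_3+k_5)/2+\delta k_5$ on $\log(1+a_0)$, rather than the sharper values $k_2/4+1/2+\delta$ and $(k_3+k_5)/4+\delta k_5/2$ that appear in $A_1$.
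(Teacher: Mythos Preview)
Your proposal is correct and follows the same route as the paper: split $I$ via (\ref{L2}), apply the Phragm\'{e}n--Lindel\"{o}f bounds from Lemma~\ref{lem1} on the two finite pieces, and integrate in~$\sigma$. Your observation about using the $\sigma$-dependent exponents of $(1+a_0)$ and $(1+a_1)$ from the proof of Lemma~\ref{lem1} (rather than the constant exponents in $\alpha_1,\alpha_2$) is exactly right and is what the paper tacitly does --- the coefficients $k_2/4+1/2+\delta$ and $(k_3+k_5)/4+k_5\delta/2$ in the stated $A_1$ are only obtainable this way, whereas feeding $\alpha_1,\alpha_2$ directly would yield the larger $(k_2+1)/2+\delta$ and $(k_3+k_5)/2+k_5\delta$, as you note.
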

The term corresponding to $C_{1}$ in Lemma 2.8 of \cite{TrudgianTuring} is, $k_{2}/4+ \delta k_{2}/2$. Since we are not permitted to take $\delta$ too small, lest the integral in $A_{1}$ become too large, this represents a considerable qualitative saving. This is due entirely to estimating $|\zeta(s)|$, not in one go over $\sigma\in[\frac{1}{2}, 1+ \delta]$ as in \cite{TrudgianTuring} but by using Lemma \ref{lem1}.
We combine Lemma \ref{second} with Lemma 2.11 in \cite{TrudgianTuring} to obtain
\begin{thm}\label{T2}
For $t_{2}\geq t_{1} \geq 10^{5}$,
\begin{equation*}
\bigg| \int_{t_{1}}^{t_{2}} S(t)\, dt\bigg| \leq a + b\log\log t_{2}+ c\log t_{2},
\end{equation*}
where 
\begin{equation}\label{fina}
\begin{split}
\pi a &= \int_{1+ \delta}^{\infty} \log |\zeta(\sigma)| \, d\sigma 
+ (\frac{k_{2}}{4} + \frac{1}{2} + \delta) \log (1+ a_{1}(1+ \delta, Q_{0}, 10^{5})) + \frac{1}{4} \log k_{1}  \\
&
+ (\frac{k_{3}}{4} + \frac{k_{5}}{4} + \frac{k_{5}\delta}{2}) \log (1+ a_{0}(1+ \delta, Q_{0}, 10^{5}))
+ (\frac{1}{4} + \frac{\delta}{2})\log k_{4} 
+ \frac{\delta}{2} \log \zeta(1+ \delta)\\
 &+ d^{2} \log 4\left\{ -\frac{\zeta'(\frac{1}{2} + d)}{\zeta(\frac{1}{2} + d)} - \frac{1}{2} \log 2\pi + \frac{1}{4}\right\} + \frac{d^{2}}{2} \log \pi - \frac{1}{2} \int_{1+ 2d}^{\infty} \log \zeta(\sigma)\, d\sigma \\
 & + \int_{\frac{1}{2} + d}^{\infty}\log \zeta(\sigma)\, d\sigma - \frac{1}{2} \int_{1+ 2d}^{1+ 4d} \log \zeta(\sigma)\, d\sigma 
+ \int_{\frac{1}{2} + d}^{\frac{1}{2} + 2d} \log \zeta(\sigma)\, d\sigma + 3\times 10^{-4},
\end{split}
\end{equation}
and 
\begin{equation}\label{finb}
\pi b = \left( \frac{k_{3} + k_{5}}{4} + \frac{\delta k_{5}}{2}\right), \quad\quad \pi c = \frac{k_{2}}{4} + \frac{d^{2}}{2}(\log 4 -1).
\end{equation}
\end{thm}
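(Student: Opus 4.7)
The plan is to combine the upper bound on
$$I(t) = \Re\!\int_{\frac{1}{2}+it}^{\infty+it}\log\zeta(s)\,ds$$
furnished by Lemma \ref{second} with the matching lower-bound input provided by Lemma 2.11 of \cite{TrudgianTuring}, and to read off the coefficients of $1$, $\log\log t_{2}$, and $\log t_{2}$ in the combined estimate.

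First, I would invoke the standard Littlewood-type identity from \cite[\S 2]{TrudgianTuring} that expresses $\pi\int_{t_{1}}^{t_{2}}S(t)\,dt$ as a difference of $I$ at heights $t_{1}$ and $t_{2}$ plus an explicitly controllable error (the source of the $3\times 10^{-4}$ tail in \eqref{fina}). This reduces the theorem to simultaneously bounding $I(t_{2})$ from above and $-I(t_{1})$ from above.

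Second, I would apply Lemma \ref{second} at the height $t=t_{2}$ with $t_{0}=10^{5}$, which yields $I(t_{2})\leq A_{1}+B_{1}\log\log t_{2}+C_{1}\log t_{2}$. This produces the first two lines of $\pi a$ in \eqref{fina}, the entire value of $\pi b$ in \eqref{finb}, and the $k_{2}/4$ piece of $\pi c$. Third, I would apply Lemma 2.11 of \cite{TrudgianTuring}, which is left unchanged by the present paper, to bound $-I(t_{1})$ from above under the hypothesis $t_{1}\geq 10^{5}$. Its contribution supplies the third and fourth lines of \eqref{fina} (the block involving $d$ and the logarithmic derivative $\zeta'(\tfrac{1}{2}+d)/\zeta(\tfrac{1}{2}+d)$) together with the remaining $d^{2}(\log 4-1)/2$ in $\pi c$. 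Summing the two upper bounds, dividing by $\pi$, and using monotonicity of $\log t$ and $\log\log t$ to absorb any residual $t_{1}$-dependence into $t_{2}$-dependence yields the stated inequality.

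The main obstacle is not analytic but organisational: one must verify that each constant migrating from Lemma \ref{second} and from Lemma 2.11 of \cite{TrudgianTuring} is routed into the correct one of $a,b,c$, and that the running hypotheses (the monotonicity conditions \eqref{conditions} for the choice \eqref{values}, and the domain of validity for Lemma 2.11) are simultaneously met at $t_{0}=10^{5}$. Once the arithmetic is done, the fact that the new $C_{1}$ is just $k_{2}/4$ rather than $k_{2}/4+\delta k_{2}/2$, as noted after Lemma \ref{second}, is what drives the improvement over \cite[Thm.~2.2]{TrudgianTuring}.
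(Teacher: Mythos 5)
Your proposal is correct and follows exactly the paper's route: the paper proves Theorem \ref{T2} precisely by combining Lemma \ref{second} (applied with $t_{0}=10^{5}$, supplying $A_{1}$, $B_{1}$ and the $k_{2}/4$ part of $\pi c$) with Lemma 2.11 of \cite{TrudgianTuring} (supplying the $d$-dependent block and the $d^{2}(\log 4 -1)/2$ term), exactly as you describe.
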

\subsection{Computation}
Before we commence an analysis of the coefficients appearing in Theorem \ref{T2} we make the following observation. One may replace the values of $(k_{1}, k_{2}, k_{3})$ in (\ref{values}) by
\begin{equation}\label{bounds}
(k_{1}, k_{2}, k_{3}) = \left(\frac{4}{(2\pi)^{\frac{1}{4}}}, \frac{1}{4}, 0\right),
\end{equation}
which appear in \cite[Lem.\ 2]{Lehman}. The values in (\ref{bounds}) are obtained using the approximate functional equation of $\zeta(s)$: the values in (\ref{values}) are obtained using exponential sums. The value $k_{2}= \frac{1}{4}$ follows from convexity theorems. We call  (\ref{bounds}) the convexity result, and (\ref{values}) the sub-convexity result. 

As in Theorem 2.12 in \cite{TrudgianTuring} no term in either (\ref{fina}) or (\ref{finb}) depends on both $\delta$ and $d$. We can run two one-dimensional optimisations on each of $a$, $b$ and $c$. In Table \ref{tabled} we compare the results obtained from the convexity result (C), the sub-convexity result (SC), and the coefficients in Theorem 2.2 of \cite{TrudgianTuring}, when $t_{1} \approx T$. The values of $\delta, d, a, b$ and $c$ correspond to the sub-convexity result. We find that the sub-convexity result overtakes the convexity result when $T\geq 2.85\times 10^{10},$ which is, just barely, beneath the height to which the Riemann hypothesis has been verified --- see \cite{Plattarxiv}.

The values used in Theorem \ref{main} are taken from the row $T = 10^{10}$. It should be stressed that all of the results in this table are valid for $t_{1}\geq 10^{5}$. The stated values of $a, b$ and $c$  are those that are close to the best values obtainable by this method when $t_{1} \approx T$.

\begin{table}[ht]
\caption{Comparison of bounds for $|\int_{t_{1}}^{t_{2}} S(t)\, dt|\leq a + b \log\log t_{2} + c \log t_{2}$}
\centering
\begin{tabular}
{c c c c c c c c c}
\hline\hline
$T$ & Thm 2.2 & C & SC & $d$ & $\delta$ & $a$ & $b$ & $c$  \\[0.5ex]\hline
$10^{5}$ & 2.747 & 2.629 & 2.658 & 0.883 & 0.279 & 1.457 & 0.204 & 0.062 \\
$10^{6}$ & 2.883 & 2.800 & 2.827 & 0.845 & 0.237 & 1.520 & 0.197 & 0.058 \\
$10^{7}$ & 3.018 & 2.959 & 2.982& 0.817 & 0.206 & 1.573 & 0.192 & 0.055 \\
$10^{8}$ & 3.154 & 3.110 & 3.128& 0.795 & 0.182 & 1.620 & 0.189 & 0.053 \\
$10^{9}$ & 3.290 & 3.255 & 3.266& 0.777 & 0.163 & 1.661 & 0.186 & 0.051  \\
$10^{10}$ & 3.426 & 3.395 & 3.398& 0.762 & 0.148 & 1.698 & 0.183 & 0.049  \\
$10^{11}$ & 3.562 & 3.530 & 3.526 & 0.749 & 0.135 & 1.733 & 0.181 & 0.048 \\
$10^{12}$ & 3.698 & 3.663 & 3.649 & 0.738 & 0.124 & 1.764 & 0.179 & 0.047 \\
$10^{13}$ & 3.834 & 3.792 & 3.770 & 0.729 & 0.115 & 1.792 & 0.178 & 0.046 \\
$10^{14}$ & 3.969 & 3.919 & 3.887 & 0.720 & 0.107 & 1.820 & 0.177 & 0.046 \\
$10^{15}$ & 4.105 & 4.044 & 4.002& 0.713 & 0.100 & 1.844 & 0.176 & 0.045  \\
 \hline\hline
\end{tabular}
\label{tabled}
\end{table}

\section{Conclusion}\label{conc}
It seems difficult to improve substantially on Theorem \ref{main}. 
Given that the improvements obtained in this paper are only modest, and since further improvements would require a lot of effort in estimating $\zeta(\frac{1}{2} +it)$ or $\zeta(1+ it)$, it seems hopeless to try to improve this part of the argument. 

One could try one's luck at reducing the term $\log 4$ that appears in both (\ref{fina}) and (\ref{finb}). This comes from Lemma 4.4 in \cite{Booker}.  Reducing this would have a more profound influence on bounding $|\int_{t_{1}}^{t_{2}}S(t)\, dt|$ than better bounds for $|\zeta(s)|$.

Finally, it is worth considering Theorems 3.3 and 4.3 in \cite{TrudgianTuring}, which relate to Dirichlet $L$-functions and Dedekind zeta-functions. Both of these could be improved, in line with this article, were one in possession of explicit estimates on the lines $\sigma = \frac{1}{2}$ and $\sigma = 1$. 
One such estimate, bounding $|L(1+ it, \chi)|$ for $L(s)$ a Dirichlet $L$-function, appears in \cite{DudekL}. It is possible that this could be used to obtain an improvement to Theorem 3.3 in \cite{TrudgianTuring}.

\bibliographystyle{plain}
\bibliography{themastercanada}

\end{document}